\theoremstyle{thmstyleone}%
\newtheorem{theorem}{Theorem}%
\newtheorem{proposition}[theorem]{Proposition}%
\theoremstyle{thmstyletwo}%
\newtheorem{example}{Example}%
\newtheorem{remark}{Remark}%
\theoremstyle{thmstylethree}%
\newtheorem{definition}{Definition}%
\newcommand{\Ga}{\Gamma}
\newcommand{\tmu}{\tilde{\mu}}
\newcommand{\Cat}[1]{\mathbf{#1}}
\newcommand{\Ab}{\Cat{Ab}}
\newcommand{\QCoh}{\Cat{QCoh}}
\newcommand{\SpecGnC}[1]{\mathrm{Spec}^{\mathrm{nc}}_{\Ga}\!\bigl(#1\bigr)}
\newcommand{\nTGMod}[1]{#1\text{-}\Ga\mathrm{Mod}}
\newcommand{\Dcat}{\mathbf{D}}
\newcommand{\Ch}{\mathbf{Ch}}
\newcommand{\RExtG}{\ensuremath{\mathrm{R}\!\operatorname{Ext}^{\Ga}}}
\newcommand{\LTorG}{\ensuremath{\mathrm{L}\!\operatorname{Tor}_{\Ga}}}
\DeclareMathOperator{\End}{End}
\DeclareMathOperator{\Hom}{Hom}
\DeclareMathOperator{\id}{id}
\DeclareMathOperator{\Ker}{Ker}
\DeclareMathOperator{\Img}{Im}
\DeclareMathOperator{\Proj}{Proj}
\DeclareMathOperator{\Aut}{Aut}
\DeclareMathOperator{\Obj}{Obj}
\DeclareMathOperator{\Pic}{Pic}
\newcommand{\ProjG}{\Cat{Proj}_\Gamma}
\newcommand{\Perf}{\mathrm{Perf}}
\newcommand{\cof}{\mathsf{cof}}        
\newcommand{\weq}{\mathsf{weq}}        
\begin{document}
\title{Projective Modules and Classical Algebraic $K$-Theory of Non-Commutative $\Gamma$-Semirings}

\author{Chandrasekhar Gokavarapu\\Lecturer in Mathematics, Government College(A),Rajahmundry,\\ A.P., India, PIN: 533105}

\maketitle

\abstract{In this paper, we initiate the study of algebraic K-theory for 
non-commutative $\Gamma$-semirings, extending the classical 
constructions of Grothendieck and Bass to this setting. 
We first establish the categorical foundations by constructing the 
category of finitely generated projective bi-$\Gamma$-modules over a 
non-commutative $\Gamma$-semiring $T$. 
We prove that this category admits an exact structure, allowing for the 
definition of the Grothendieck group $K_0^\Gamma(T)$. 
Furthermore, we develop the theory of the Whitehead group 
$K_1^\Gamma(T)$ using elementary matrices and the Steinberg relations 
in the non-commutative $\Gamma$-semiring context. 
We establish the fundamental exact sequences linking $K_0$ and $K_1$ 
and provide explicit calculations for specific classes of 
non-commutative $\Gamma$-semirings. 
This work lays the algebraic groundwork for future studies on higher 
K-theory spectra.}

{\bf Keywords:}$\Gamma$-semiring, algebraic $K$-theory, exact categories, Waldhausen category, projective modules

{\bf MSC Classification:} 19A49, 19B14, 18G55, 16Y60, 16D40

\maketitle
\section{Introduction}

The study of algebraic structures generalizing rings has seen 
significant developments in recent decades, particularly in the 
context of non-commutative $\Gamma$-semirings. A non-commutative $\Gamma$-semiring generalizes the classical structure introduced by Nobusawa \cite{Nobusawa1964} and Barnes \cite{Barnes1966}. Comprehensive structural results were later developed by Sen and Saha \cite{SenSaha1986}, Kyuno \cite{Kyuno1978}, and Rao \cite{Rao1995}. 
In this paper, all $\Gamma$-semirings are $n$-ary unless otherwise stated. This structure, introduced to capture a broader class of algebraic phenomena, provides a flexible framework for studying non-commutative systems where the classical ring-theoretic axioms are too restrictive. For comparison with the classical binary case, recall that if $S$ and $\Gamma$ are two additive commutative semigroups, then $S$ is called a (binary) non-commutative $\Gamma$-semiring if there exists a mapping $S \times \Gamma \times S \to S$ (images denoted by $a\alpha b$) satisfying the associative laws $(a\alpha b)\beta c = a\alpha(b\beta c)$ and the distributive laws with respect to addition. While the structural theory of ideals, radicals, and modules over non-commutative $\Gamma$-semirings has been well-documented in recent literature, the homological and homotopical aspects remain largely unexplored.Quillen's subsequent development of higher K-theory formalized these invariants for exact categories, while aspects of this theory were extended to standard semirings as detailed by Golan \cite{Golan1999}.

\begin{definition}
A \emph{non-commutative $n$-ary $\Gamma$-semiring} is a quadruple 
$(T,+,\Gamma,\tmu)$ in which $(T,+)$ is a commutative monoid, 
$\Gamma$ is a commutative semigroup, and 
\[
\tmu:T^n\times \Gamma^{\,n-1}\longrightarrow T
\]
is an $n$-ary external multiplication that is additive in each $T$-variable, 
absorbs $0$, satisfies $n$-ary associativity, and is not assumed to be symmetric.  
For $n=2$ this recovers the classical $\Gamma$-semiring, 
but throughout this article we work exclusively with the $n$-ary case.
\end{definition}

Despite the rich structural theory available for non-commutative $\Gamma$-semirings, a significant gap exists in the literature regarding their higher algebraic invariants. Classical algebraic K-theory, pioneered by Grothendieck, Bass, and Milnor, has been a central tool in understanding the geometry and arithmetic of rings. Quillen's subsequent development of higher K-theory formalized these invariants for exact categories, while Golan extended aspects of this theory to standard semirings. However, a unified K-theoretic approach for ternary or $\Gamma$-algebraic structures is missing. The existing frameworks for rings do not directly apply due to the ternary nature of the multiplication and the lack of additive inverses in the semiring context. Consequently, the fundamental invariants $K_0$ and $K_1$, which classify 
projective modules and automorphisms respectively, have not yet been 
rigorously constructed for non-commutative $\Gamma$-semirings.

In this paper, we address this deficiency by initiating the study of algebraic K-theory for non-commutative $\Gamma$-semirings.Classical algebraic K-theory, pioneered by Grothendieck, Bass \cite{Bass1968}, and Milnor \cite{Milnor1971}, has been a central tool in understanding the geometry and arithmetic of rings. Our primary contribution is the rigorous construction of the categorical foundations required to define classical K-groups in this setting. First, we construct the category $\Proj_\Gamma(T)$ of finitely generated projective bi-$\Gamma$-modules over a non-commutative $\Gamma$-semiring $T$. We prove that this category admits the structure of an exact category, satisfying the necessary axioms to support a Grothendieck group. We then define the functor $K_0^\Gamma(T)$ as the Grothendieck group of $\ProjG(T)$, establishing its role in classifying stable isomorphism classes of projective modules. Furthermore, we develop the theory of the Whitehead group $K_1^\Gamma(T)$ by generalizing the General Linear Group $GL_n(T)$ and Elementary Matrices $E_n(T)$ to the $\Gamma$-context. We prove the analogue of the Whitehead Lemma, showing that $K_1^\Gamma(T) \cong GL(T)/E(T)$, and derive the fundamental exact sequences connecting these invariants.

This article constitutes the first part of a comprehensive program to develop a full homotopy theory for non-commutative $\Gamma$-semirings. While the classical invariants $K_0$ and $K_1$ are the focus of the present work, they serve as the necessary algebraic groundwork for higher invariants. In a subsequent work, we will address the higher K-theory by applying Quillen's Q-construction and Waldhausen's S-construction to the exact category $\ProjG(T)$ established here, culminating in the definition of the stable homotopy spectrum $K_\Gamma(T)$ and the study of geometric descent. This paper, therefore, focuses strictly on the classical invariants, establishing the algebraic solidity required before advancing to the stable homotopy category.
\section{Preliminaries and Categorical Background}
\label{sec:prelim}

The algebraic $K$--theory of $\Gamma$--semirings rests upon three
structural layers: (i) the algebraic data of non--commutative
$n$--ary $\Gamma$--semirings, (ii) their exact and stable
$\Gamma$--module categories, and (iii) the homotopical
stabilization leading to spectral invariants.  This section unifies
these strata and fixes the categorical conventions employed in all
subsequent constructions.

\subsection{Non--commutative \texorpdfstring{$n$}{n}--ary \texorpdfstring{$\Gamma$}{Gamma}--semirings}

A \emph{non--commutative $n$--ary $\Gamma$--semiring} is a system
$(T,+,\Gamma,\tmu)$ where $(T,+)$ is a commutative monoid with
identity~$0$, $\Gamma$ is an auxiliary commutative semigroup,
and
\[
\tmu:T^n\times\Gamma^{n-1}\longrightarrow T
\]
is an $n$--ary external multiplication satisfying:
\begin{enumerate}
\item \textbf{Additivity:}
$\tmu$ is additive in each $T$--variable;
\item \textbf{Zero--absorption:}
$\tmu(\ldots,0,\ldots;\gamma_1,\ldots,\gamma_{n-1})=0$;
\item \textbf{$n$--ary associativity:}
nested applications of~$\tmu$ are bracket--independent; and
\item \textbf{Non--symmetry:}
the order of arguments is meaningful, encoding positional
non--commutativity.
\end{enumerate}

Examples include matrix and endomorphism $\Gamma$--semirings,
operator algebras with multi--index weights, and graded tensor
systems.  When $\tmu$ is symmetric, the theory recovers the
commutative $\Gamma$--semirings of \cite{GokavarapuRao2025}.For $n=2$ this recovers the classical $\Gamma$-semiring, but throughout this article we work exclusively with the $n$-ary case, extending the operator theoretic models found in \cite{Dutta2011} and \cite{Abbas2018}.

\subsection{\texorpdfstring{$\Gamma$}{Gamma}--module categories and enrichment}

Let $T$ be a fixed non--commutative $n$--ary $\Gamma$--semiring.
We write
\[
\nTGMod{T}^{\mathrm{L}},\quad
\nTGMod{T}^{\mathrm{R}},\quad
\nTGMod{T}^{\mathrm{bi}}
\]
for the categories of left, right, and bi--$\Gamma$--modules.
Objects are additive monoids $(M,+)$ equipped with positional
actions
\[
T^{n-1}\times M\times\Gamma^{n-1}\to M,
\qquad
(M,+)\in\nTGMod{T}^{\mathrm{L}},
\]
and analogously for the right and bi--cases.
Morphisms are $\Gamma$--linear additive maps commuting with all
actions.  These categories are enriched over~$\Ab$ and are
additive with finite biproducts.

The tensor product
\[
-\otimes_\Gamma-:
\nTGMod{T}^{\mathrm{bi}}\times\nTGMod{T}^{\mathrm{bi}}
\longrightarrow \nTGMod{T}^{\mathrm{bi}}
\]
is induced by~$\tmu$ and carries a canonical associator arising
from the $n$--ary associativity of~$T$.
This operation equips $\nTGMod{T}^{\mathrm{bi}}$ with a
(possibly non--symmetric) monoidal structure.

\subsection{Exact and additive structures}

A sequence
\[
0\longrightarrow A
  \xrightarrow{i} B
  \xrightarrow{p} C
\longrightarrow 0
\]
in $\nTGMod{T}^{\mathrm{bi}}$ is called \emph{exact} if
$i$~is the kernel of~$p$ and $p$~the cokernel of~$i$ in the
additive sense.
We denote by $\Proj_\Gamma(T)$ the full subcategory of
finitely generated projective bi--$\Gamma$--modules, where
$P$ is projective if $\Hom_\Gamma(P,-)$ preserves such
exact sequences.

\begin{proposition}[Exact category]
\label{prop:exact}
$\Proj_\Gamma(T)$, equipped with the class of split
short exact sequences, forms an \emph{exact category}
in Quillen's sense~\cite{Quillen1973}.
\end{proposition}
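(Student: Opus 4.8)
The plan is to exploit the fact that the split short exact sequences furnish the canonical minimal exact structure carried by every additive category; the work therefore splits into two essentially independent tasks. First I would confirm that $\ProjG(T)$ is itself additive---closed under finite biproducts and containing a zero object---inheriting its $\Ab$-enrichment from the ambient category $\nTGMod{T}^{\mathrm{bi}}$. Second, I would verify Quillen's axioms for the distinguished class, taking the admissible monomorphisms (inflations) to be the maps $i$ and the admissible epimorphisms (deflations) the maps $p$ occurring in a split sequence $0 \to A \xrightarrow{i} B \xrightarrow{p} C \to 0$. Since every such sequence is isomorphic, through its splitting, to a biproduct sequence $0 \to A \xrightarrow{\iota_A} A \oplus C \xrightarrow{\pi_C} C \to 0$, all the axioms may be checked on these explicit models.

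For additivity, the zero module is trivially finitely generated and projective, and I would show $\ProjG(T)$ is closed under finite biproducts: a finite direct sum of finitely generated modules is finitely generated, and projectivity persists because $\Hom_\Gamma(P_1 \oplus P_2, -) \cong \Hom_\Gamma(P_1, -) \oplus \Hom_\Gamma(P_2, -)$ exhibits the Hom functor of a biproduct as a finite product of exactness-preserving functors, hence exactness-preserving. A companion point, needed so that all three terms of a split sequence lie in $\ProjG(T)$, is closure under retracts: the kernel $A$ of a split deflation $A \oplus C \to C$ is a direct summand of $B$, and a summand of a finitely generated projective is again finitely generated projective. Here one must check that the structure maps $\iota_A, \pi_C$ are genuinely $\Gamma$-linear for the positional $n$-ary action, which follows from additivity of $\tmu$ in each $T$-variable, so that the action distributes component-wise across the biproduct, and that the biproduct computed in the full subcategory agrees with that in the ambient category.

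With additivity secured, Quillen's axioms reduce to verifications on biproduct sequences: the identity of the zero object is simultaneously an inflation and a deflation (axiom [E0] and its dual, taking $A = 0$ or $C = 0$); composites of inflations, and of deflations, are again of the same type (axiom [E1] and its dual), the requisite splittings built from the given ones; and the pullback of a deflation $\pi_C : A \oplus C \to C$ along any $f : C' \to C$ is realized by the biproduct $A \oplus C'$ with its projection onto $C'$, again a split deflation, with the dual statement for pushouts of inflations (axiom [E2] and its dual). In each case the pullback or pushout object is a biproduct of objects of $\ProjG(T)$ and so remains in the subcategory.

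The principal obstacle is conceptual rather than computational and is specific to the semiring setting: because the hom-monoids need not admit additive inverses, the generic construction of pullbacks and pushouts as (co)kernels of differences is unavailable, so one cannot simply transplant the abelian-category verification of [E2] and its dual. The resolution is precisely the restriction to split sequences, for which the pullback and pushout admit the explicit biproduct models $A \oplus C'$ and $A' \oplus C$ described above; these exist, are preserved by $\Gamma$-linear maps, and stay inside $\ProjG(T)$ without any appeal to subtraction---the map $f$ enters the universal property directly rather than through a difference. Confirming that these explicit models genuinely satisfy the relevant universal properties, using only finite biproducts and the $\Ab$-enrichment asserted for $\nTGMod{T}^{\mathrm{bi}}$, is the step demanding the most care, after which the exact-category axioms follow formally.
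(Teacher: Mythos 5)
The paper states Proposition~\ref{prop:exact} without any proof at all, so there is no argument of the paper's to compare yours against; judged on its own terms, your proposal is correct and follows the standard route. You establish that $\ProjG(T)$ is additive (zero object, finite biproducts, closure under retracts), reduce every split sequence to the biproduct model $0 \to A \to A \oplus C \to C \to 0$, and check Quillen's axioms on these models, realizing the pullback of a split deflation along $f : C' \to C$ explicitly as $A \oplus C'$ with structure maps $\id_A \oplus f$ and $\pi_{C'}$. This is the right move: you correctly isolate the one place where the semiring setting threatens the usual verification, namely the construction of pullbacks and pushouts as (co)kernels of differences, and your explicit models satisfy the required universal properties using only the biproduct identity $\iota_A\pi_A + \iota_C\pi_C = \id$, with no subtraction. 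The single step deserving more care is the opening reduction ``every split sequence is isomorphic, through its splitting, to a biproduct sequence'': when hom-sets are only commutative monoids, a section $s$ of $p$ does not by itself yield the complementary idempotent $1 - sp$, so this step either leans on the $\Ab$-enrichment the paper asserts for $\nTGMod{T}^{\mathrm{bi}}$ (which sits uneasily with its later remark that additive inverses are unavailable) or requires defining ``split'' to mean that the full biproduct data $(i,r,s,p)$ with $ir + sp = \id_B$ is given. With that definitional choice made explicit, your argument is complete and supplies exactly the verification the paper omits.
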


Thus, every cofibration sequence in the sense of
Waldhausen~\cite{Waldhausen1985} arises from a split exact
sequence in $\Proj_\Gamma(T)$, permitting passage between
the algebraic and homotopical viewpoints.$\Cat{Proj}_\Gamma(T)$, equipped with the class of split short exact sequences, forms an \emph{exact category} in Quillen's sense \cite{Quillen1973}.

\subsection{Cofibrations and Waldhausen categories}

Let $\Cat{C}$ be an additive category with distinguished
cofibrations and weak equivalences.
Following Waldhausen, we define a \emph{Waldhausen category}
$(\Cat{C},\cof,\weq)$ by the axioms:
\begin{enumerate}
\item every isomorphism is a cofibration and a weak equivalence;
\item cofibrations are stable under pushout;
\item weak equivalences satisfy the two--out--of--three property.
\end{enumerate}
For $\Cat{C}=\Proj_\Gamma(T)$, the cofibrations are the
split monomorphisms and weak equivalences are isomorphisms.
The \emph{Waldhausen $S_\bullet$--construction} applied to
$\Cat{C}$ produces a simplicial category whose geometric
realization $|S_\bullet\Cat{C}|$ yields a spectrum
\[
K_\Gamma(T)\;=\;\Omega|S_\bullet\Proj_\Gamma(T)|.
\]
This defines the connective algebraic~$K$--theory of~$T$,
before stabilization.
Thus, every cofibration sequence in the sense of Waldhausen \cite{Waldhausen1985} arises from a split exact sequence in $\Cat{Proj}_\Gamma(T)$, permitting passage between the algebraic and homotopical viewpoints.
\subsection{Derived and stable \texorpdfstring{$\infty$}{infty}--categorical frameworks}

Let $\mathbf{Ch}(\nTGMod{T}^{\mathrm{bi}})$ denote the category
of chain complexes in $\nTGMod{T}^{\mathrm{bi}}$, localized at
quasi--isomorphisms.  Its homotopy category is
$\mathbf{D}(\nTGMod{T}^{\mathrm{bi}})$.
Passing to the dg--nerve yields a stable $\infty$--category
$\mathbf{D}_\infty(\nTGMod{T}^{\mathrm{bi}})$,
which supports derived functors
\[
\RExtG,\quad
\LTorG:\ 
\mathbf{D}_\infty(\nTGMod{T}^{\mathrm{bi}})^{\mathrm{op}}\times
\mathbf{D}_\infty(\nTGMod{T}^{\mathrm{bi}})
\longrightarrow \mathbf{D}_\infty(\Ab).
\]
Compact dualizable objects form the subcategory of
\emph{perfect $\Gamma$--complexes},
\[
\Perf_\Gamma(T)\subset
\mathbf{D}_\infty(\nTGMod{T}^{\mathrm{bi}}),
\]
which is small, idempotent--complete, and stable.
Following Blumberg--Gepner--Tabuada~\cite{BGT2013}, this provides
the universal input for the higher~$K$--theory spectrum
$K_\Gamma(T)$.

\subsection{Functoriality and base--change}

A morphism of $\Gamma$--semirings
$f:(T,\Gamma)\to (T',\Gamma')$
induces exact and derived functors
\[
f^*:\Perf_\Gamma(T')\leftrightarrows
\Perf_\Gamma(T):f_*,
\]
satisfying projection and base--change formulas
analogous to those in derived algebraic geometry.
Flatness of~$f$ ensures that
$K_\Gamma(f):K_\Gamma(T)\to K_\Gamma(T')$
is a homotopy--invariant map of spectra.

\subsection{Stabilization and spectra}

The connective $K$--spectrum $K_\Gamma(T)$ admits successive
deloopings
\[
K_\Gamma(T)\simeq
\Omega^\infty\Sigma^\infty BQ(\Proj_\Gamma(T))
\simeq |S_\bullet\Proj_\Gamma(T)|,
\]
yielding homotopy groups
$K_i^\Gamma(T)=\pi_iK_\Gamma(T)$ for $i\ge0$.
Stabilization furnishes the spectrum--level equivalence between
the Quillen $Q$--construction and the Waldhausen
$S_\bullet$--construction, ensuring consistency between algebraic
and $\infty$--categorical models.

\bigskip
\noindent
These categorical preliminaries provide the foundation on which
Section~\ref{sec:K0K1} constructs
the $\Gamma$--linear algebraic $K$--groups, proves their
localization and comparison theorems, and integrates them into
the derived and motivic frameworks established in \cite{GokavarapuRao2025, GokavarapuRao2025B,Gokavarapu2025,Gokavarapu2025B,Gokavarapu2025C}

\[
\begin{tikzcd}[column sep=huge, row sep=1.2em, ampersand replacement=\&]
(T,+,\Gamma,\tmu) \arrow[r, "\text{bi-}\Gamma\text{-Mod}"]
\& \nTGMod{T}^{\mathrm{bi}}
   \arrow[r, "\text{proj/derived}"]
\& \Perf_\Gamma(T)
   \arrow[r, "Q \simeq S_\bullet" description]
\& \mathbf{K}_\Gamma(T)
\end{tikzcd}
\]

\section{\texorpdfstring
{The Classical $K$-Theory Functors $K_0^\Gamma$ and $K_1^\Gamma$}
{The Classical $K$-Theory Functors $K_0$ Gamma and $K_1$ Gamma}}
\label{sec:K0K1}

In this section, we construct the fundamental algebraic invariants $K_0^\Gamma(T)$ and $K_1^\Gamma(T)$ for a non-commutative $\Gamma$-semiring $T$. We proceed by first establishing the categorical framework of projective modules and then defining the invariants using the classical Grothendieck and Whitehead constructions adapted to the $\Gamma$-semiring context.

\subsection{\texorpdfstring{The Grothendieck Group $K_0^\Gamma(T)$}{Grothendieck Group K0 Gamma T}}
\label{subsec:K0}

We define the zeroth K-group as the Grothendieck group of the exact category of projective modules. Unlike the ring-theoretic setting, $\Proj_\Gamma(T)$ is an additive category but not abelian (due to the lack of additive inverses in $T$). However, it admits the structure of an exact category where the admissible exact sequences are the split short exact sequences.

\begin{definition}[Grothendieck Group]
\label{def:K0}
The group $K_0^\Gamma(T)$ is defined as the group completion of the abelian monoid of isomorphism classes of objects in $\Proj_\Gamma(T)$ under the direct sum operation. Explicitly, it is the abelian group generated by symbols $[P]$ for each $P \in \Obj(\Proj_\Gamma(T))$, subject to the relations:
\[
[P] = [P'] + [P'']
\]
for every admissible short exact sequence $0 \to P' \to P \to P'' \to 0$.
\end{definition}

Since all short exact sequences in $\Proj_\Gamma(T)$ split, the relation simplifies to $[P' \oplus P''] = [P'] + [P'']$. This definition is directly analogous to the construction for vector bundles over topological spaces \cite{Swan1962} and projective modules over rings \cite{Rosenberg1994}.

\begin{proposition}[Basic properties of $K_0$]
\label{prop:K0-basic}
The functor $K_0^\Gamma(-)$ satisfies the following structural properties:
\begin{enumerate}
\item \textbf{Additivity:} $[P\oplus Q]=[P]+[Q]$.
\item \textbf{Split exactness:} If $P \cong P' \oplus P''$, then $[P]=[P']+[P'']$.
\item \textbf{Idempotent invariance:} If $e^2=e \in \End_\Gamma(P)$ is a projection, then $[P] = [\Img(e)] + [\Img(1-e)]$.
\item \textbf{Morita Invariance:} If $T$ and $T'$ are Morita equivalent $\Gamma$-semirings, then $K_0^\Gamma(T) \cong K_0^{\Gamma'}(T')$.
\end{enumerate}
\end{proposition}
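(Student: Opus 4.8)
The plan is to obtain the first three properties directly from Definition~\ref{def:K0} together with the split-exact structure of $\Proj_\Gamma(T)$ recorded in Proposition~\ref{prop:exact}, and to deduce Morita invariance from the functoriality of the Grothendieck group under equivalences of exact categories.

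For \textbf{Additivity} and \textbf{Split exactness}, I would note that the biproduct diagram furnishes a split short exact sequence $0\to P\to P\oplus Q\to Q\to 0$, with the canonical inclusion as kernel and the canonical projection as cokernel; this is precisely an admissible sequence in the exact structure. The defining relation of $K_0^\Gamma(T)$ then yields $[P\oplus Q]=[P]+[Q]$ at once, proving part (1). Part (2) is immediate once one observes that $[\,\cdot\,]$ is constant on isomorphism classes: if $P\cong P'\oplus P''$ then $[P]=[P'\oplus P'']=[P']+[P'']$ by part (1). Both steps are routine bookkeeping.

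For \textbf{Idempotent invariance}, there are two tasks. First, I would show that $\Img(e)$ is again a finitely generated projective bi-$\Gamma$-module: because $e^2=e$, the inclusion $\Img(e)\into P$ is split by $e$, so $\Img(e)$ is a retract of $P$, and a retract of a finitely generated projective object is finitely generated projective. Second, I would establish the internal decomposition $P\cong\Img(e)\oplus\Img(1-e)$ and then invoke part (2). The delicate point is the meaning of $1-e$, since $\End_\Gamma(P)$ need not admit additive inverses in the semiring setting; I would therefore read the hypothesis as positing a complementary idempotent $f=1-e$ with $e+f=\id_P$ and $ef=fe=0$. Under this reading each $x\in P$ decomposes as $x=e(x)+f(x)$ into the two complementary retracts, the sum is direct, and part (2) gives $[P]=[\Img(e)]+[\Img(1-e)]$.

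The main obstacle is \textbf{Morita invariance}. The strategy is to reduce it to functoriality of $K_0$: a Morita equivalence between $T$ and $T'$ should supply an additive equivalence $\Phi:\Proj_\Gamma(T)\iso\Proj_{\Gamma'}(T')$, realized by tensoring with a progenerator bimodule built from the external multiplications $\tmu$. The key steps are (i) to verify that $\Phi$ is additive, so that $\Phi(P\oplus Q)\cong\Phi(P)\oplus\Phi(Q)$; (ii) to verify that $\Phi$ and a quasi-inverse carry split short exact sequences to split short exact sequences, i.e.\ are exact for the chosen exact structures; and (iii) to conclude, from the universal property of the Grothendieck group of an exact category, that $\Phi$ induces a homomorphism $K_0^\Gamma(T)\to K_0^{\Gamma'}(T')$ with inverse induced by the quasi-inverse. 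The genuinely hard part lies in (i)--(ii): one must confirm that the tensor functor preserves finite generation, projectivity, and the split structure, which requires the (possibly non-symmetric) monoidal machinery of Section~\ref{sec:prelim} and a precise definition of Morita equivalence for non-commutative $\Gamma$-semirings. Once the equivalence of exact categories is in place, passage to Grothendieck groups is formal.
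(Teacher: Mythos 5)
Your proposal follows essentially the same route as the paper: parts (1)--(2) from the split-exact defining relations, part (3) via the retract decomposition induced by the idempotent, and part (4) by passing an equivalence of exact categories to group completions. If anything, your treatment of part (3) is more careful than the paper's --- you explicitly reinterpret $1-e$ as a complementary idempotent $f$ with $e+f=\id_P$ and $ef=fe=0$, where the paper only gestures at this with the phrase ``provided $T$ is sufficiently rich'' --- and your itemization of what must actually be checked for Morita invariance (preservation of finite generation, projectivity, and the split structure by the tensor functor) makes explicit the verifications the paper leaves implicit.
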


\begin{proof}
\textbf{(a) \& (b):} These follow immediately from the definition of the group completion. Since the exact structure on $\Proj_\Gamma(T)$ is the split exact structure, the relation $[P] = [P'] + [P'']$ is imposed if and only if $P \cong P' \oplus P''$.

\textbf{(c):} Let $e \in \End_\Gamma(P)$ be an idempotent. Since $T$ is a $\Gamma$-semiring, the image of a projection operator on a projective module is projective. We have the canonical decomposition $P \cong \Img(e) \oplus \Ker(e)$. In the absence of subtraction, $\Ker(e)$ coincides with $\Img(1-e)$ provided $T$ is sufficiently rich (e.g., possesses a zero and identity). The splitting yields the exact sequence $0 \to \Img(e) \to P \to \Img(1-e) \to 0$, implying $[P] = [\Img(e)] + [\Img(1-e)]$.

\textbf{(d):} A Morita equivalence defines an equivalence of categories $F: \nTGMod{T}^{\mathrm{bi}} \to \nTGMod{T'}^{\mathrm{bi}}$ which preserves projective objects and exact sequences. Consequently, $F$ induces an isomorphism on the monoids of isomorphism classes, which extends uniquely to the group completions.
\end{proof}

\subsection{\texorpdfstring{The Whitehead Group $K_1^\Gamma(T)$}{Whitehead Group K1(Gamma)(T)}}

\label{subsec:K1}
To define the Whitehead group $K_1^\Gamma(T)$, we utilize the automorphism category approach. While for rings this links to $GL/E$ via the Whitehead Lemma \cite{Bass1968, Weibel2013}, in the semiring context, the absence of additive inverses precludes standard row-reduction techniques, making the categorical definition more robust.
The definition of $K_1$ for semirings requires care because the classical definition $GL(R)/E(R)$ relies on row reduction, which is obstructed by the lack of additive inverses. We utilize the automorphism category approach, which is robust for exact categories.

\begin{definition}[Automorphism Category]
Let $\Cat{Aut}(\Proj_\Gamma(T))$ be the category whose objects are pairs $(P, \alpha)$, where $P \in \Proj_\Gamma(T)$ and $\alpha \in \Aut_\Gamma(P)$. A morphism $f: (P, \alpha) \to (Q, \beta)$ is a morphism $f: P \to Q$ in $\Proj_\Gamma(T)$ such that $\beta \circ f = f \circ \alpha$.
\end{definition}

\begin{definition}[The Group $K_1$]
$K_1^\Gamma(T)$ is the abelian group generated by isomorphism classes $[(P, \alpha)]$ subject to the relations:
\begin{enumerate}
    \item $[(P, \alpha \circ \beta)] = [(P, \alpha)] + [(P, \beta)]$ (Product Relation).
    \item $[(P, \alpha)] + [(Q, \beta)] = [(P \oplus Q, \alpha \oplus \beta)]$ (Sum Relation).
    \item For any short exact sequence $0 \to P' \to P \to P'' \to 0$ preserved by $\alpha$ (inducing $\alpha'$ and $\alpha''$), $[(P, \alpha)] = [(P', \alpha')] + [(P'', \alpha'')]$.
\end{enumerate}
\end{definition}

\begin{proposition}[Generators and Universal Determinant]
\label{prop:K1-presentation}
The group $K_1^\Gamma(T)$ acts as the universal target for determinant functors. Specifically, there exists a canonical isomorphism:
\[
K_1^\Gamma(T) \cong \varinjlim_{n} GL_n(T) / [GL_n(T), GL_n(T)]^\dagger
\]
where the commutator quotient is taken in the derived sense appropriate for semirings (congruence by elementary matrices).
\end{proposition}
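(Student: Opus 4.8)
The plan is to exhibit an explicit pair of mutually inverse homomorphisms between the stable quotient $\varinjlim_n GL_n(T)/E_n(T)$ and the automorphism-theoretic group $K_1^\Gamma(T)$, using that each free module $T^n$ is an object of $\ProjG(T)$ and that $GL_n(T)=\Aut_\Ga(T^n)$. First I would define the forward map $\Phi$ on the unstable pieces by $g\mapsto[(T^n,g)]$. The Product Relation $[(P,\alpha\circ\beta)]=[(P,\alpha)]+[(P,\beta)]$ shows immediately that $g\mapsto[(T^n,g)]$ is a group homomorphism from $GL_n(T)$ into the abelian group $K_1^\Gamma(T)$; in particular $[(T^n,\id)]=0$ (take $\alpha=\beta=\id$), so inverses are respected.

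To pass to the colimit and the quotient I would verify two compatibilities. Stabilization along $GL_n(T)\hookrightarrow GL_{n+1}(T)$, $g\mapsto g\oplus 1$, is compatible because the Sum Relation gives $[(T^{n+1},g\oplus 1)]=[(T^n,g)]+[(T,\id)]=[(T^n,g)]$, so $\Phi$ is defined on $\varinjlim_n GL_n(T)$. That $\Phi$ annihilates the $\Ga$-elementary subgroup $E_n(T)$ is the structurally important point, and here the semiring setting forces a different argument than the classical one: each $\Ga$-elementary transvection is unipotent, preserving a complete flag of free submodules $0\subset T\subset T^2\subset\cdots\subset T^n$ on which it induces the identity on every successive quotient $\cong T$. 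Applying the short-exact-sequence Relation (3) repeatedly along this flag telescopes the class into a sum of identity classes $[(T,\id)]=0$, so every generator of $E_n(T)$ maps to $0$. Hence $\Phi$ descends to a map $\varinjlim_n GL_n(T)/E_n(T)\to K_1^\Gamma(T)$.

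Surjectivity of $\Phi$ is then immediate from the stabilization machinery: given any generator $[(P,\alpha)]$, choose $Q$ with $P\oplus Q\cong T^n$; the Sum Relation gives $[(P,\alpha)]=[(P,\alpha)]+[(Q,\id)]=[(T^n,\alpha\oplus\id)]$, and transporting $\alpha\oplus\id$ across the isomorphism yields $g\in GL_n(T)$ with $[(P,\alpha)]=\Phi([g])$. For injectivity I would construct the inverse $\Psi$ by the same recipe—sending $[(P,\alpha)]$ to the class of this $g$—and check that it respects all three defining relations of $K_1^\Gamma(T)$: the Product and Sum Relations correspond to matrix multiplication and block-diagonal sum, while Relation (3) corresponds to the fact that an automorphism preserving a split filtration is, after stabilizing, $\Ga$-elementarily congruent to the block-diagonal of its induced pieces.

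The hard part will be establishing that $\Psi$ is well defined independently of the chosen complement $Q$ and the isomorphism $P\oplus Q\cong T^n$. Two such choices differ, after common stabilization, by an inner automorphism of $T^n$, so well-definedness requires that conjugate matrices become equal in the stable quotient—a semiring analogue of the Whitehead Lemma. The classical proof of this is unavailable: the identity factoring $\mathrm{diag}(g,g^{-1})$ into elementary matrices uses the entries $-1$ and $-a^{-1}$ and thus collapses without additive inverses. This is precisely why the quotient in the statement is taken in the congruence sense signalled by the dagger: I would define $[GL_n(T),GL_n(T)]^\dagger$ as the normal closure of $E_n(T)$ enlarged by the conjugation discrepancies $hgh^{-1}g^{-1}$, and then verify—by the same unipotent-filtration device, applied to the filtration-preserving automorphism realizing the commutator rather than to a subtraction-based factorization—that these discrepancies already reduce to $\Ga$-elementary congruences. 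Carrying out this conjugation-invariance through the flag argument, in place of the unavailable Whitehead identity, is the crux on which canonicity of the isomorphism rests.
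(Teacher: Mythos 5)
Your proposal follows the same two-map skeleton as the paper's proof ($\Phi$ from the stable matrix quotient into the automorphism-theoretic group, and a stabilization-based inverse $\Psi$), but it replaces the paper's two weakest steps with genuinely different and more rigorous arguments, so the comparison is worth recording. For the vanishing of elementary classes, the paper appeals to shear transformations being ``homotopic to the identity in the geometric realization of the Q-construction'' --- an argument external to the definition of $K_1^\Gamma(T)$ actually in force --- whereas you derive it internally from Relation (3) by filtering $T^n$ by the complete flag of free submodules preserved by a transvection and telescoping the identity classes on the successive quotients. That is the correct mechanism in this setting. Likewise, you are the only one of the two to confront the well-definedness of $\Psi$: the paper asserts the correspondence of relations and concludes ``the map is well-defined and an isomorphism,'' while you correctly isolate the dependence on the choice of complement $Q$ and of the isomorphism $P\oplus Q\cong T^n$ as a conjugation ambiguity requiring a semiring analogue of the Whitehead Lemma.

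One caution on your proposed resolution of that crux: a general commutator $hgh^{-1}g^{-1}$ is not unipotent and preserves no flag, so the filtration device cannot be ``applied to the automorphism realizing the commutator'' in the way it is applied to transvections. What does work is your fallback: take $[GL_n(T),GL_n(T)]^\dagger$ to be the subgroup generated by $E_n(T)$ together with all commutators. Then $\Phi$ kills it automatically (its target is abelian and $g\mapsto[(T^n,g)]$ is a homomorphism), $\Psi$ becomes well defined because the conjugation discrepancies lie in the subgroup being quotiented, and conjugation invariance on the $K_1^\Gamma$ side is free, since $h$ itself is an isomorphism $(P,\alpha)\to(P,h\alpha h^{-1})$ in the automorphism category. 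What remains genuinely open --- in your proposal and in the paper alike --- is whether the elementary subgroup already exhausts the stable commutator subgroup, i.e.\ the semiring Whitehead Lemma; neither the flag argument nor the classical factorization of $\mathrm{diag}(g,g^{-1})$ into elementary matrices settles this without additive inverses, and the $\dagger$ in the statement is doing exactly the work of setting that question aside.
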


\begin{proof}
Let $\mathcal{D}$ be the proposed universal group. We construct the map $\Phi: K_1^\Gamma(T) \to \mathcal{D}$ and $\Psi: \mathcal{D} \to K_1^\Gamma(T)$.

First, consider the elementary automorphisms. In the semiring setting, an elementary matrix $E_{ij}(\lambda)$ (with $\lambda \in T \times \Gamma$) acts on the free module $T^n$. In $K_1^\Gamma(T)$, the class of an elementary automorphism is trivial. To see this, observe that any elementary matrix is essentially a shear transformation, which is homotopic to the identity in the geometric realization of the Q-construction. Alternatively, using the sum relation:
\[
\begin{pmatrix} 1 & x \\ 0 & 1 \end{pmatrix} \oplus \begin{pmatrix} 1 & 0 \\ 0 & 1 \end{pmatrix} \cong \begin{pmatrix} 1 & x \\ 0 & 1 \end{pmatrix} \begin{pmatrix} 1 & 0 \\ 0 & 1 \end{pmatrix}.
\]
Algebraic stability ensures these classes vanish in the limit.

Conversely, any automorphism $\alpha$ of a projective module $P$ can be stabilized to an automorphism of a free module $F \cong P \oplus Q$ via $\alpha \oplus \id_Q$. This maps the class $[(P, \alpha)]$ to the class of a matrix in $GL_N(T)$. The relations (1) and (2) in Definition \ref{subsec:K1} correspond exactly to the multiplication in $GL$ and the block-diagonal embedding $GL_n \hookrightarrow GL_{n+1}$. The relation (3) corresponds to the block-triangular decomposition of matrices. Thus, the map is well-defined and an isomorphism.
\end{proof}

\begin{proposition}[Functoriality, Morita invariance, and product structure]
\label{prop:K1-functoriality}
Every exact functor $F:\Proj_\Gamma(T)\to\Proj_\Gamma(T')$ induces $F_\ast:K_1^\Gamma(T)\to K_1^{\Gamma'}(T')$ via
$F_\ast\langle P,\varphi\rangle=\langle F(P),F(\varphi)\rangle$.  If $F$ is a derived Morita equivalence,
then $F_\ast$ is an isomorphism. Moreover, $\oplus$ induces a canonical abelian monoid structure on
automorphisms and $K_1$ is functorial with respect to this monoidal operation.
\end{proposition}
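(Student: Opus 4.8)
The plan is to treat the three assertions in turn, since functoriality underpins both the Morita statement and the monoidal compatibility. First I would define $F_\ast$ on generators by $F_\ast\langle P,\varphi\rangle=\langle F(P),F(\varphi)\rangle$ and check that this assignment respects the three defining relations of $K_1^\Gamma(T)$. The product relation is immediate from functoriality: since $F(\alpha\circ\beta)=F(\alpha)\circ F(\beta)$, we get $\langle F(P),F(\alpha\circ\beta)\rangle=\langle F(P),F(\alpha)\rangle+\langle F(P),F(\beta)\rangle$ by the product relation in the target. The sum relation follows from additivity of $F$, since the canonical isomorphism $F(P\oplus Q)\cong F(P)\oplus F(Q)$ intertwines $F(\alpha\oplus\beta)$ with $F(\alpha)\oplus F(\beta)$. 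The exact-sequence relation $(3)$ is exactly where exactness of $F$ enters: $F$ carries a split short exact sequence preserved by $\alpha$ to one preserved by $F(\alpha)$, with induced automorphisms $F(\alpha')$ and $F(\alpha'')$. As $F_\ast$ is defined on generators and annihilates every relation, it extends uniquely to a group homomorphism.

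For the Morita statement I would run the standard quasi-inverse argument, but with care about what happens at the level of $\Proj_\Gamma$. A derived Morita equivalence is an equivalence $\mathbf{D}_\infty(\nTGMod{T}^{\mathrm{bi}})\simeq\mathbf{D}_\infty(\nTGMod{T'}^{\mathrm{bi}})$ restricting to $\Perf_\Gamma(T)\simeq\Perf_\Gamma(T')$; I would first argue it restricts to an exact equivalence of the subcategories of finitely generated projectives, these being the connective dualizable objects preserved by any such equivalence. Let $G$ be a quasi-inverse, again exact. Then $G_\ast F_\ast$ and $F_\ast G_\ast$ are induced by $G\circ F$ and $F\circ G$, each naturally isomorphic to the identity. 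The key lemma is that naturally isomorphic exact functors induce the same map on $K_1$: a natural isomorphism $\eta\colon F\Rightarrow F'$ supplies for each $(P,\varphi)$ an isomorphism $\eta_P\colon F(P)\to F'(P)$ with $\eta_P\circ F(\varphi)=F'(\varphi)\circ\eta_P$, which is precisely an isomorphism $(F(P),F(\varphi))\to(F'(P),F'(\varphi))$ in $\Cat{Aut}(\Proj_{\Gamma'}(T'))$; hence the classes agree and $F_\ast=F'_\ast$. Combining, $G_\ast F_\ast=\id$ and $F_\ast G_\ast=\id$, so $F_\ast$ is an isomorphism.

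For the monoidal statement I would observe that $\oplus$ defines a symmetric monoidal structure on $\Cat{Aut}(\Proj_\Gamma(T))$ by $(P,\alpha)\oplus(Q,\beta)=(P\oplus Q,\alpha\oplus\beta)$ with unit $(0,\id)$; passing to isomorphism classes yields an abelian monoid. The sum relation $(2)$ is exactly the statement that the canonical map from this monoid into $K_1^\Gamma(T)$ carries $\oplus$ to $+$. Functoriality with respect to $\oplus$ is then the identity $F_\ast(\langle P,\alpha\rangle+\langle Q,\beta\rangle)=F_\ast\langle P,\alpha\rangle+F_\ast\langle Q,\beta\rangle$, immediate from additivity of $F$ together with the sum relation verified in the first step.

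The main obstacle I anticipate is the opening move of the Morita argument, namely showing that a derived Morita equivalence, defined on the stable $\infty$-categories $\Perf_\Gamma$, genuinely descends to an exact equivalence of the degree-zero subcategory $\Proj_\Gamma$ whose automorphism category computes $K_1$. In the semiring setting one cannot invoke the usual abelian-category characterization of projectives, so I would need an intrinsic description of finitely generated projective bi-$\Gamma$-modules (as retracts of free modules, equivalently as connective dualizable objects) and verify that this description, together with genuine exactness for the split exact structure rather than mere additivity, is preserved by both the equivalence and its quasi-inverse.
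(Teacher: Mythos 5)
The paper states this proposition with no proof at all, so there is no argument of record to compare against. On its own terms, your treatment of the first and third assertions is correct and complete: checking the three defining relations of $K_1^\Gamma(T)$ against functoriality, additivity, and exactness of $F$ is exactly what is needed for well-definedness, and the observation that relation (2) is precisely the statement that the canonical map from the monoid $(\,\Obj(\Cat{Aut}(\Proj_\Gamma(T)))/{\cong},\oplus\,)$ to $K_1^\Gamma(T)$ is monoidal disposes of the product-structure claim.

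The gap is in the Morita statement, and it is the one you yourself flag. The quasi-inverse argument requires a derived Morita equivalence to restrict to an exact equivalence $\Proj_\Gamma(T)\simeq\Proj_\Gamma(T')$, and this is false in general: a derived equivalence of $\Perf_\Gamma$ is implemented by a tilting-type object and need not preserve connectivity or carry finitely generated projectives to finitely generated projectives. ``Connective dualizable objects'' is not an equivalence-invariant description of $\Proj_\Gamma$ inside $\Perf_\Gamma$ unless one also fixes a $t$-structure that the equivalence is assumed to respect --- which is exactly the classical, non-derived, Morita case. The escape route consistent with what the paper does for the analogous claim in Proposition~\ref{prop:base-change-K01} (Step 6) is to abandon the automorphism-category model for this one step: identify $K_1^\Gamma(T)$ with $\pi_1$ of the spectrum built from $\Perf_\Gamma(T)$, note that an equivalence of stable $\infty$-categories induces an equivalence of $K$-theory spectra, and conclude on homotopy groups. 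But that substitution silently invokes a comparison theorem --- that the automorphism presentation of $K_1^\Gamma$ agrees with $\pi_1$ of the perfect-complex spectrum, the analogue of the Gillet--Waldhausen and $+{=}Q$ theorems --- which is nowhere established in the paper and is not automatic in the semiring setting. So either the hypothesis should be read as an honest Morita equivalence (an exact equivalence of the $\Proj_\Gamma$ categories), in which case your quasi-inverse argument together with your lemma that naturally isomorphic exact functors induce equal maps on $K_1$ closes the proof; or the derived hypothesis must be kept and the missing comparison theorem supplied. Your proposal correctly isolates this as the crux but does not resolve it.
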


\begin{remark}[Comparison with stable linear groups]
When a sensible ``stable general linear group'' $GL_\infty(T)$ can be defined (e.g.\ matrix models of \cite{Gokavarapu2025,Gokavarapu2025B,Gokavarapu2025C}),
there is a canonical epimorphism
\[
GL_\infty(T)\longrightarrow K_1^\Gamma(T),
\]
whose kernel contains the subgroup generated by elementary (contractible) automorphisms.
In general semiring contexts without additive inverses, the exact/Waldhausen models remain the robust definition.
\end{remark}

\subsection{Additivity, Devissage, and Euler Characteristics}
\label{subsec:additivity-devissage}

\begin{theorem}[Additivity]
\label{thm:additivity}
Let 
\[
A^\bullet \longrightarrow B^\bullet \longrightarrow C^\bullet
\]
be a cofibration sequence of perfect complexes in 
$\Ch_{\!b}\big(\Proj_\Gamma(T)\big)$, where cofibrations are degreewise
split monomorphisms and $C^\bullet$ is identified with the corresponding
quotient complex. Then the Euler characteristic satisfies
\[
\chi(B^\bullet)
\;=\;
\chi(A^\bullet)+\chi(C^\bullet)
\qquad\text{in }K_0^\Gamma(T).
\]
Moreover, the connecting morphisms in the long exact sequence of
$K$--groups associated to the Waldhausen fibration sequence agree with
those induced by the $S_\bullet$--construction on the Waldhausen
category $\Proj_\Gamma(T)$.
\end{theorem}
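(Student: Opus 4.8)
The plan is to treat the theorem's two assertions separately, reducing each to the split-exact structure on $\ProjG(T)$ furnished by Proposition~\ref{prop:exact} and exploiting that the Euler characteristic lands in the genuinely group-complete target $K_0^\Gamma(T)$. First I would fix the definition
$$\chi(X^\bullet)=\sum_i(-1)^i[X^i]\in K_0^\Gamma(T),$$
observing that the alternating sum is legitimate because $K_0^\Gamma(T)$ is an abelian group even though the monoid underlying $\ProjG(T)$ admits no subtraction. For the additivity identity, the hypothesis that cofibrations are degreewise split monomorphisms means that in each degree $i$ one has a split short exact sequence
$$0\longrightarrow A^i\longrightarrow B^i\longrightarrow C^i\longrightarrow 0$$
in $\ProjG(T)$. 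The split exactness recorded in Proposition~\ref{prop:K0-basic} then gives $[B^i]=[A^i]+[C^i]$ for every $i$; multiplying by $(-1)^i$ and summing --- an operation valid precisely because the target is a group --- yields $\chi(B^\bullet)=\chi(A^\bullet)+\chi(C^\bullet)$.

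The second assertion is the substantive one: it is the $K_0$-level shadow of Waldhausen's Additivity Theorem transported to the $\Gamma$-semiring setting. Here I would observe that a degreewise-split cofibration sequence determines a canonical $2$-simplex in $S_\bullet\ProjG(T)$, whose outer face maps $d_2$ and $d_0$ recover $A^\bullet$ and $C^\bullet$ while the middle face $d_1$ recovers $B^\bullet$. The simplicial relation governing $\pi_0$ of the realization --- which is exactly the origin of the connecting morphisms in the Waldhausen fibration sequence --- then reproduces the algebraic relation $[B]=[A]+[C]$ already verified. To upgrade this from an equation among elements to a statement identifying the connecting homomorphisms, I would invoke the equivalence $Q\simeq S_\bullet$ recorded in the preliminaries, which matches the boundary differentials of the $Q$-construction with the face-map structure of the $S_\bullet$-construction.

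The hard part will be this second half: without additive inverses one cannot directly import the standard additivity argument, which repeatedly forms differences of morphisms and of chosen splittings. The delicate point is to verify that the face maps of $S_\bullet\ProjG(T)$ are well-defined on the split structure with no appeal to cancellation, and that the induced map on $\pi_0$ is the honest group-completion map of Definition~\ref{def:K0}. I would handle this by working exclusively within the split exact structure, in which every admissible sequence is a biproduct, so that $d_0$ and $d_2$ are realized by genuine coordinate projections and inclusions rather than by non-canonical splittings; this keeps every construction subtraction-free and allows the comparison with the $S_\bullet$-differential to proceed formally.
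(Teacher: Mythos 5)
Your proposal follows essentially the same route as the paper: the additivity identity is obtained exactly as in the paper's Step~2 (the degreewise split exact sequences give $[B^i]=[A^i]+[C^i]$ in $K_0^\Gamma(T)$, and the alternating sum is then taken in the group-completed target), and the claim about connecting morphisms is, as in the paper's Step~3, ultimately deferred to Waldhausen's additivity/fibration machinery via the $S_\bullet$-construction, with your explicit $2$-simplex in $S_\bullet\Proj_\Gamma(T)$ being a slightly more concrete rendering of the same appeal. The only difference is that you omit the paper's Step~1 (quasi-isomorphism invariance of $\chi$ via mapping cones and stupid truncations), which the paper includes to make $\chi$ a derived invariant but which is not needed for the identity as literally stated.
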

We briefly recall the framework of Quillen--Waldhausen devissage \cite{Quillen1973, Waldhausen1985} and verify that it applies in our setting.
\begin{proof}
We divide the proof into three steps.

\medskip\noindent
\textbf{Step 1: Euler characteristic and quasi-isomorphisms.}
Let $P^\bullet$ be a bounded complex in $\Ch_{\!b}(\Proj_\Gamma(T))$.
Define its Euler characteristic by
\[
\chi(P^\bullet)
\;:=\;
\sum_{n\in\mathbb{Z}} (-1)^n [P^n]
\;\in\;
K_0^\Gamma(T),
\]
where $[P^n]$ denotes the class of $P^n$ in $K_0^\Gamma(T)$.
Since $P^\bullet$ is bounded, the sum is finite.

We first show that $\chi(P^\bullet)$ depends only on the isomorphism
class of $P^\bullet$ in the derived category; in particular, it is
invariant under quasi-isomorphisms between perfect complexes.

Let $f:P^\bullet\to Q^\bullet$ be a quasi-isomorphism between perfect
complexes and let $\operatorname{Cone}(f)$ be its mapping cone.
In the homotopy category this fits into a distinguished triangle
\[
P^\bullet \xrightarrow{\,f\,} Q^\bullet
\longrightarrow \operatorname{Cone}(f)
\longrightarrow P^\bullet[1].
\]
Since $f$ is a quasi-isomorphism, $\operatorname{Cone}(f)$ is acyclic.
It suffices to show that $\chi(\operatorname{Cone}(f))=0$ in
$K_0^\Gamma(T)$; then additivity for the short exact sequence of
complexes
\[
0 \longrightarrow Q^\bullet 
  \longrightarrow \operatorname{Cone}(f)[-1]\oplus Q^\bullet
  \longrightarrow P^\bullet \longrightarrow 0
\]
and homotopy invariance imply $\chi(P^\bullet)=\chi(Q^\bullet)$.

Thus we reduce to the case of an acyclic perfect complex $A^\bullet$.
Filter $A^\bullet$ by the ``stupid truncations'':
\[
0 = \sigma_{>N}A^\bullet \subset \sigma_{\ge N}A^\bullet
  \subset \cdots \subset \sigma_{\ge n}A^\bullet
  \subset \cdots \subset \sigma_{\ge M}A^\bullet = A^\bullet,
\]
where $M\le N$ bound the nonzero degrees of $A^\bullet$.
The subquotients are complexes concentrated in a single degree,
with zero differential, and two-term complexes of the form
\[
0 \to A^n \xrightarrow{\;d^n\;} A^{n+1} \to 0
\]
with the rest zero.  Each short exact sequence of complexes
\[
0 \to \sigma_{\ge n+1}A^\bullet
  \to \sigma_{\ge n}A^\bullet
  \to Q_n^\bullet \to 0
\]
is degreewise split because we work in $\Proj_\Gamma(T)$, so in
$K_0^\Gamma(T)$ we have
\[
\chi(\sigma_{\ge n}A^\bullet)
=
\chi(\sigma_{\ge n+1}A^\bullet) + \chi(Q_n^\bullet).
\]
Hence
\[
\chi(A^\bullet)
=
\sum_n \chi(Q_n^\bullet).
\]

It therefore suffices to show that $\chi(Q^\bullet)=0$ for each
$Q^\bullet$ which is either
(i)~concentrated in a single degree with zero differential, or
(ii)~a two-term complex
$0\to P\xrightarrow{\;\varphi\;}P\to 0$ with $\varphi$ an isomorphism.

In case~(i), acyclicity forces $Q^\bullet=0$, so $\chi(Q^\bullet)=0$.
In case~(ii), $Q^\bullet$ is contractible; more concretely, its Euler
characteristic is
\[
\chi(Q^\bullet)
=
[P] - [P]
=
0
\quad\text{in }K_0^\Gamma(T).
\]
Thus every acyclic perfect complex has Euler characteristic zero, and
$\chi$ is invariant under quasi-isomorphism.

\medskip\noindent
\textbf{Step 2: Additivity for split short exact sequences of complexes.}
Now let
\[
0 \longrightarrow A^\bullet
  \xrightarrow{i} B^\bullet
  \xrightarrow{p} C^\bullet
\longrightarrow 0
\]
be a short exact sequence of bounded complexes in
$\Ch_{\!b}(\Proj_\Gamma(T))$ which is degreewise split exact.  This is
precisely the situation of a cofibration sequence in the Waldhausen
structure we have fixed.

For each degree $n$ we have a split short exact sequence in
$\Proj_\Gamma(T)$:
\[
0 \longrightarrow A^n
  \xrightarrow{i^n} B^n
  \xrightarrow{p^n} C^n
\longrightarrow 0.
\]
Since $i^n$ splits, we have a direct sum decomposition
\[
B^n \;\cong\; A^n \oplus C^n
\]
and hence, in $K_0^\Gamma(T)$,
\[
[B^n] = [A^n] + [C^n].
\]
Summing over all $n$ with alternating signs, we obtain
\[
\chi(B^\bullet)
=
\sum_n (-1)^n [B^n]
=
\sum_n (-1)^n\bigl([A^n]+[C^n]\bigr)
=
\chi(A^\bullet) + \chi(C^\bullet)
\]
in $K_0^\Gamma(T)$, as claimed.

\medskip\noindent
\textbf{Step 3: Compatibility with the Waldhausen $S_\bullet$--construction.}
The category $\Ch_{\!b}(\Proj_\Gamma(T))$ equipped with degreewise split
cofibrations and quasi-isomorphisms as weak equivalences is a
Waldhausen category.  Waldhausen’s Additivity and Fibration theorems
(see \cite{Waldhausen1985}) imply that the long exact sequence of
$K$--groups associated to a cofibration sequence of Waldhausen
categories is induced by the simplicial structure of the
$S_\bullet$--construction.

In our situation, the cofibration sequence
$A^\bullet\to B^\bullet\to C^\bullet$ above defines a point in
$S_1\Ch_{\!b}(\Proj_\Gamma(T))$, and the connecting maps in the long
exact sequence
\[
\cdots \to K_1^\Gamma(T) \to K_1^\Gamma(T)
\to K_1^\Gamma(T) \xrightarrow{\partial} K_0^\Gamma(T)
\to \cdots
\]
are, by construction, the boundary maps arising from the homotopy
fibre sequence of spectra determined by that cofibration sequence.

Since Euler characteristics are compatible with the simplicial
structure (by Step~2) and invariant under quasi-isomorphisms
(Step~1), these boundary maps coincide with the connecting morphisms
induced by the $S_\bullet$--construction on the Waldhausen category
$\Proj_\Gamma(T)$.  This completes the proof.
\end{proof}

\begin{proposition}[Devissage]
\label{prop:devissage}
Let $\mathcal{S} \subset \Proj_\Gamma(T)$ be a Serre (i.e.\ extension-closed)
exact subcategory; for instance, the full subcategory of projective
bi-$\Gamma$-modules supported on a closed subset of $\SpecGnC{T}$.
Assume that every object $P \in \Proj_\Gamma(T)$ admits a finite
filtration
\[
0 = P_0 \subset P_1 \subset \cdots \subset P_m = P
\]
in $\Proj_\Gamma(T)$ whose successive subquotients
$P_j/P_{j-1}$ all lie in $\mathcal{S}$. Then the inclusion
$i:\mathcal{S}\hookrightarrow\Proj_\Gamma(T)$ induces an isomorphism
\[
K_0^\Gamma(\mathcal{S})
\;\xrightarrow{\ \cong\ }\;
K_0^\Gamma\big(\Proj_\Gamma(T)\big).
\]
The same conclusion holds for the Whitehead group $K_1^\Gamma$.
\end{proposition}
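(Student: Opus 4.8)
The plan is to construct an explicit inverse to the induced map $i_\ast\colon K_0^\Gamma(\mathcal{S})\to K_0^\Gamma\bigl(\Proj_\Gamma(T)\bigr)$, following the classical Quillen dévissage strategy while exploiting the exact structure fixed in Proposition~\ref{prop:exact} and the Euler-characteristic calculus already developed in Theorem~\ref{thm:additivity}. The candidate inverse is the \emph{filtration characteristic}
\[
\phi\colon \Obj\bigl(\Proj_\Gamma(T)\bigr)\longrightarrow K_0^\Gamma(\mathcal{S}),
\qquad
\phi(P)=\sum_{j=1}^{m}\bigl[\,P_j/P_{j-1}\,\bigr]_{\mathcal{S}},
\]
computed from any admissible filtration $0=P_0\subset\cdots\subset P_m=P$ with $P_j/P_{j-1}\in\mathcal{S}$, whose existence is guaranteed by hypothesis. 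I will treat $K_0^\Gamma$ first and then import the template to $K_1^\Gamma$.

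First I would establish that $\phi$ is well defined, i.e.\ independent of the chosen filtration. The key input is a Schreier-type refinement (Zassenhaus lemma) for admissible subobjects in the exact category $\Proj_\Gamma(T)$: any two such filtrations admit equivalent refinements. Because $\mathcal{S}$ is extension-closed, each refinement step $P_{j-1}\subset P'\subset P_j$ produces subquotients $P'/P_{j-1}$ and $P_j/P'$ that again lie in $\mathcal{S}$ and satisfy $[P_j/P_{j-1}]_{\mathcal{S}}=[P'/P_{j-1}]_{\mathcal{S}}+[P_j/P']_{\mathcal{S}}$ inside $K_0^\Gamma(\mathcal{S})$; hence refinement leaves $\phi(P)$ unchanged, and equivalence of refinements forces any two filtrations to yield the same value. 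Additivity of $\phi$ on short exact sequences then follows by concatenating a filtration of the subobject with the lift of a filtration of the quotient, so $\phi$ descends to a homomorphism $\Phi\colon K_0^\Gamma\bigl(\Proj_\Gamma(T)\bigr)\to K_0^\Gamma(\mathcal{S})$.

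It then remains to check $\Phi\circ i_\ast=\id$ and $i_\ast\circ\Phi=\id$. The first is immediate: an object $S\in\mathcal{S}$ carries the length-one filtration $0\subset S$, so $\Phi\bigl(i_\ast[S]_{\mathcal{S}}\bigr)=[S]_{\mathcal{S}}$. The second follows from Theorem~\ref{thm:additivity}, since the filtration of $P$ yields $[P]=\sum_j[P_j/P_{j-1}]$ already in $K_0^\Gamma\bigl(\Proj_\Gamma(T)\bigr)$, which is precisely $i_\ast\Phi([P])$. This settles the $K_0$ statement.

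For $K_1^\Gamma$ the same template applies on generators $[(P,\alpha)]$: when $\alpha$ preserves an $\mathcal{S}$-filtration it induces graded automorphisms $\alpha_j\in\Aut_\Gamma(P_j/P_{j-1})$, and iterating relation~(3) of the $K_1$ presentation gives $[(P,\alpha)]=\sum_j[(P_j/P_{j-1},\alpha_j)]$, defining the inverse on filtration-preserving pairs. The main obstacle is that a \emph{general} automorphism need not preserve any $\mathcal{S}$-filtration, so one must show that $[(P,\alpha)]$ can always be reduced---after stabilization by identities and conjugation via relations~(1)--(2)---to a filtration-preserving representative, with a value independent of all choices; I expect this reduction, together with the Schreier argument for the graded automorphisms, to be the technically delicate point. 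The cleanest way to finish is to bypass the elementary manipulation entirely and invoke the homotopy-theoretic dévissage: the Waldhausen $S_\bullet$/Quillen $Q$-construction framework already set up for Theorem~\ref{thm:additivity} yields a homotopy equivalence $|S_\bullet\mathcal{S}|\simeq|S_\bullet\Proj_\Gamma(T)|$ through the Theorem~A fibre computation, which simultaneously proves the isomorphism for $K_0^\Gamma$, $K_1^\Gamma$, and indeed all $K_i^\Gamma$, and thereby confirms that the elementary maps constructed above are mutually inverse.
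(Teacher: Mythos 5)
Your route is genuinely different from the paper's. The paper does not construct an inverse at all: it forms the Quillen quotient $\mathcal{Q}=\Proj_\Gamma(T)/\mathcal{S}$, invokes the localization fibre sequence $K_\Gamma(\mathcal{S})\to K_\Gamma(\Proj_\Gamma(T))\to K_\Gamma(\mathcal{Q})$, and argues that the filtration hypothesis forces every object of $\mathcal{Q}$ to become isomorphic to zero, so that $K_\Gamma(\mathcal{Q})$ is contractible and the long exact sequence gives the isomorphism in all degrees simultaneously. Your explicit filtration characteristic is the classical hands-on d\'evissage argument at the level of $K_0$; it has the advantage of producing a concrete inverse and of not presupposing the existence of the quotient exact category, which is itself a nontrivial construction in this setting.

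There is, however, a genuine gap at the point you flag as routine: the Schreier--Zassenhaus refinement lemma is not available in $\Proj_\Gamma(T)$. This category is additive but explicitly not abelian (the paper stresses the absence of additive inverses), and its exact structure consists of split short exact sequences only. Refinement requires forming $P'\cap P_j$ and $(P'+P_{j-1})/P_{j-1}$ and knowing that these are again admissible subobjects with admissible quotients; in a non-abelian exact category none of this is guaranteed, which is precisely why Quillen's d\'evissage theorem is stated for abelian categories and is known to fail for general exact categories. Without refinement, the independence of $\phi(P)$ from the chosen filtration --- the heart of your well-definedness argument --- is unestablished. The same issue undermines your fallback for $K_1^\Gamma$: the ``Theorem A fibre computation'' you invoke is Quillen's contractibility argument for the poset of admissible layers, which again rests on the subobject calculus of an abelian category, so it does not discharge the filtration-preservation problem for automorphisms that you correctly identify as delicate. (For what it is worth, the paper's own Step~2 --- every object becomes zero in $\mathcal{Q}$ --- leans on the same unverified subobject calculus, so the two proofs share this weakness; but as written, your argument leaves both the well-definedness of $\Phi$ and the entire $K_1^\Gamma$ case open.)
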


\begin{proof}
We briefly recall the framework of Quillen--Waldhausen devissage and
verify that it applies in our setting.

\medskip\noindent
\textbf{Step 1: Exact/Waldhausen structure and the quotient.}
Equip $\Proj_\Gamma(T)$ with the exact structure of split short exact
sequences, as in Proposition~\ref{prop:exact}.  The subcategory
$\mathcal{S}$ inherits an exact structure from $\Proj_\Gamma(T)$,
since it is Serre and extension-closed.  Thus both
$\Proj_\Gamma(T)$ and $\mathcal{S}$ are exact categories in Quillen’s
sense, and hence Waldhausen categories with cofibrations given by
admissible monomorphisms and weak equivalences taken to be isomorphisms.

Consider the exact quotient category
$\mathcal{Q} := \Proj_\Gamma(T)/\mathcal{S}$ in the sense of
Quillen.  Objects of $\mathcal{Q}$ are those of $\Proj_\Gamma(T)$,
and morphisms are localized by declaring maps with kernel and cokernel
in $\mathcal{S}$ to become isomorphisms.  There is an exact functor
$\Proj_\Gamma(T) \to \mathcal{Q}$ whose kernel is $\mathcal{S}$.

Quillen’s localization theorem for exact categories (or its
Waldhausen version for cofibration categories) yields a homotopy
fibre sequence of $K$--theory spectra
\[
K_\Gamma(\mathcal{S})
\longrightarrow K_\Gamma\big(\Proj_\Gamma(T)\big)
\longrightarrow K_\Gamma(\mathcal{Q}),
\]
and hence a long exact sequence on homotopy groups:
\[
\cdots \to K_i^\Gamma(\mathcal{S})
\to K_i^\Gamma\big(\Proj_\Gamma(T)\big)
\to K_i^\Gamma(\mathcal{Q})
\to K_{i-1}^\Gamma(\mathcal{S}) \to \cdots .
\]

\medskip\noindent
\textbf{Step 2: Vanishing of $K_\Gamma(\mathcal{Q})$.}
We claim that $K_\Gamma(\mathcal{Q})$ is contractible, so that all
groups $K_i^\Gamma(\mathcal{Q})$ vanish.  This will imply that
\[
K_i^\Gamma(\mathcal{S})
\;\xrightarrow{\ \cong\ }\;
K_i^\Gamma\big(\Proj_\Gamma(T)\big)
\quad\text{for all }i\ge 0.
\]

By assumption, every object $P \in \Proj_\Gamma(T)$ admits a finite
filtration with successive quotients $P_j/P_{j-1}$ in $\mathcal{S}$.
In the quotient exact category $\mathcal{Q}$, each object of
$\mathcal{S}$ becomes isomorphic to zero, so each subquotient
$P_j/P_{j-1}$ is zero in $\mathcal{Q}$.  Hence, the filtration shows
that $P$ itself is isomorphic to zero in $\mathcal{Q}$: inductively,
each extension step is an extension of zero objects, and thus trivial
in $\mathcal{Q}$.  Therefore \emph{every} object of $\mathcal{Q}$ is
isomorphic to the zero object.

Consequently, $\mathcal{Q}$ is equivalent, as an exact/Waldhausen
category, to the terminal category with a single zero object.  The
$K$--theory of such a trivial category is contractible, so
$K_\Gamma(\mathcal{Q})$ is contractible and
$K_i^\Gamma(\mathcal{Q})=0$ for all $i\ge 0$.

\medskip\noindent
\textbf{Step 3: Conclusion for $K_0^\Gamma$ and $K_1^\Gamma$.}
From the long exact sequence and the vanishing
$K_i^\Gamma(\mathcal{Q})=0$, we deduce that for all $i\ge 0$ the map
\[
K_i^\Gamma(\mathcal{S})
\;\xrightarrow{\ \cong\ }\;
K_i^\Gamma\big(\Proj_\Gamma(T)\big)
\]
is an isomorphism.  In particular, this holds for $i=0$ and $i=1$,
which yields the desired statements for $K_0^\Gamma$ and
$K_1^\Gamma$.
\end{proof}

\subsection{Flatness and base change}
\label{subsec:basechange}

Let $f:(T,\Gamma)\to (T',\Gamma')$ be a morphism of non--commutative 
$n$--ary $\Gamma$--semirings. The associated extension--of--scalars functor
\[
f^\ast(-)\;:=\; - \otimes_{T,\Gamma} T' :
\nTGMod{T}^{\mathrm{bi}}\longrightarrow 
\nTGMod{T'}^{\mathrm{bi}}
\]
is defined using the induced $n$--ary external product on $T'$.  

\emph{Flatness} of $f$ means that $f^\ast$ preserves all admissible 
(split) short exact sequences in $\nTGMod{T}^{\mathrm{bi}}$; equivalently,
$f^\ast$ preserves finite direct sums and carries projective bi--$\Gamma$--modules 
to projectives. In this case, $f^\ast$ restricts to an exact functor
\[
f^\ast : \Proj_\Gamma(T)\longrightarrow\Proj_\Gamma(T')
\]
and extends degreewise to perfect complexes to yield a derived 
tensor functor
\[
f^\ast : \Perf_\Gamma(T)\longrightarrow\Perf_\Gamma(T')
\]
which preserves distinguished triangles.  
This is the notion of flatness used in the base--change statement for 
$K_0^\Gamma$ and $K_1^\Gamma$.

\subsection{Base Change and Determinants}
\label{subsec:base-change-determinants}

\begin{proposition}[Base change]
\label{prop:base-change-K01}
Let $f:(T,\Gamma)\to (T',\Gamma')$ be a flat morphism of non--commutative 
$n$--ary $\Gamma$--semirings in the sense of \S\ref{subsec:basechange}.
Then $f$ induces natural group homomorphisms
\[
f^\ast : K_i^\Gamma(T)\;\longrightarrow\; K_i^\Gamma(T'),
\qquad i=0,1,
\]
which are functorial in $f$ and agree with the pullback of perfect
$\Gamma$--complexes via derived tensor product. If $f$ is a derived
Morita equivalence, then 
\[
f^\ast : K_i^\Gamma(T)\xrightarrow{\;\cong\;} K_i^\Gamma(T')
\quad \text{for } i=0,1.
\]
\end{proposition}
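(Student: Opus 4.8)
The plan is to build the two maps separately and then treat functoriality, the comparison with the derived pullback, and the Morita case in turn, drawing throughout on the exactness of $f^\ast$ supplied by flatness in \S\ref{subsec:basechange}.

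First I would construct the $K_0$ map. By flatness, $f^\ast = -\otimes_{T,\Gamma}T'$ restricts to an exact functor $f^\ast:\Proj_\Gamma(T)\to\Proj_\Gamma(T')$ that preserves finite direct sums and split short exact sequences. Hence $P\mapsto [f^\ast P]$ carries isomorphic modules to isomorphic modules and respects the defining relation $[P]=[P']+[P'']$ of Definition~\ref{def:K0}; by the universal property of group completion it descends to a homomorphism $f^\ast:K_0^\Gamma(T)\to K_0^\Gamma(T')$. For $K_1$ I would invoke Proposition~\ref{prop:K1-functoriality} directly with $F=f^\ast$: the assignment $\langle P,\varphi\rangle\mapsto\langle f^\ast P, f^\ast\varphi\rangle$ is the induced map, and well-definedness amounts to checking the three generating relations. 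The product relation follows from $f^\ast(\alpha\circ\beta)=f^\ast\alpha\circ f^\ast\beta$, the sum relation from additivity of $f^\ast$, and the short-exact-sequence relation from preservation of split sequences; each is immediate once $f^\ast$ is known to be exact.

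Next I would establish functoriality in $f$ and the agreement with the derived pullback. For a composable pair of flat morphisms the associativity of the $n$-ary tensor product furnishes a natural isomorphism $(g\circ f)^\ast\cong g^\ast\circ f^\ast$, and naturally isomorphic exact functors induce the same homomorphism on $K$-groups; together with $\id^\ast=\id$ this gives functoriality. To compare with the perfect-complex description I would use that $f^\ast$ extends degreewise to $\Ch_{\!b}(\Proj_\Gamma(T))$ and descends to an exact functor $\Perf_\Gamma(T)\to\Perf_\Gamma(T')$ preserving distinguished triangles. Since the Euler characteristic of Theorem~\ref{thm:additivity} is additive degreewise, one computes $\chi(f^\ast P^\bullet)=\sum_n(-1)^n[f^\ast P^n]=f^\ast\chi(P^\bullet)$, so the map induced on $K_0$ by $f^\ast$ coincides with the one obtained from derived tensor product; the analogous statement for $K_1$ follows by applying the same identity to automorphisms of complexes.

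Finally, for the Morita case I would appeal to the invariance results already proved: if $f$ is a derived Morita equivalence then $f^\ast:\Perf_\Gamma(T)\to\Perf_\Gamma(T')$ is an equivalence, so Proposition~\ref{prop:K0-basic}(d) gives $f^\ast:K_0^\Gamma(T)\xrightarrow{\cong}K_0^\Gamma(T')$ and the Morita clause of Proposition~\ref{prop:K1-functoriality} gives the corresponding isomorphism on $K_1$. The main obstacle I anticipate is not the bookkeeping but the coherence needed for functoriality: because the tensor product on $\nTGMod{T}^{\mathrm{bi}}$ carries the nontrivial associator coming from $n$-ary associativity, the isomorphism $(g\circ f)^\ast\cong g^\ast\circ f^\ast$ must be shown to be compatible with that associator, so that the resulting coherence datum is trivial at the level of $K$-theory; verifying this compatibility—rather than the mere existence of the induced maps—is the delicate step.
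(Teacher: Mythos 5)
Your proposal is correct and follows the same overall skeleton as the paper's proof: flatness gives an exact, sum-preserving functor $f^\ast$ on $\Proj_\Gamma(T)$, which descends to $K_0^\Gamma$ by the universal property of the group completion and to $K_1^\Gamma$ by checking the three generating relations of the automorphism presentation; functoriality comes from $(g\circ f)^\ast\cong g^\ast\circ f^\ast$; and the Morita clause reduces to an equivalence of categories. The one genuine difference is in how the $K_1$ map and the Morita isomorphisms are justified: the paper first promotes $f^\ast$ to an exact functor $\Perf_\Gamma(T)\to\Perf_\Gamma(T')$ on perfect complexes, obtains a map of $K$--theory spectra $K_\Gamma(f)$, reads off the $K_1$ homomorphism as $\pi_1$ of that map, and deduces the Morita case from the Blumberg--Gepner--Tabuada universal property (equivalences of stable $\infty$--categories induce equivalences of spectra); you instead stay entirely at the level of the group presentations, citing Proposition~\ref{prop:K1-functoriality} and Proposition~\ref{prop:K0-basic}(d). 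Your route is more elementary and self-contained within the classical $K_0/K_1$ framework, while the paper's spectrum-level argument gives the comparison with the derived pullback for free and extends to all $K_i$ at once; your explicit Euler-characteristic computation $\chi(f^\ast P^\bullet)=f^\ast\chi(P^\bullet)$ is a concrete substitute for that comparison at the level of $K_0$. Your closing remark about checking that the natural isomorphism $(g\circ f)^\ast\cong g^\ast\circ f^\ast$ is coherent with the $n$--ary associator is a legitimate point that the paper's Step~5 passes over silently, so flagging it as the delicate step is a reasonable refinement rather than a defect.
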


\begin{proof}
\textbf{Step 1: Base change on projective $\Gamma$--modules.}
Flatness of $f$ ensures that the extension-of-scalars functor
\[
f^\ast(-) \;:=\; - \otimes_{T,\Gamma} T'
: \Proj_\Gamma(T)\longrightarrow \Proj_\Gamma(T')
\]
is exact: for every admissible short exact sequence
\[
0\to P'\to P\to P''\to 0
\qquad\text{in }\Proj_\Gamma(T),
\]
its image under $f^\ast$ remains exact in $\Proj_\Gamma(T')$.
Moreover, $f^\ast$ preserves finite direct sums, idempotents, and
projectivity, since projectivity is detected by the lifting property
for split epimorphisms and flatness preserves such splittings.

\medskip
\textbf{Step 2: Induced maps on $K_0^\Gamma$.}
The Grothendieck group $K_0^\Gamma(T)$ is generated by symbols 
$[P]$ for $P\in\Proj_\Gamma(T)$ subject to the relations
\[
[P]=[P']+[P''] 
\quad\text{whenever}\quad 0\to P'\to P\to P''\to 0
\text{ is admissible}.
\]
Since $f^\ast$ preserves admissible exact sequences and direct sums, it
respects these relations. Hence we obtain a well-defined group
homomorphism
\[
f^\ast : K_0^\Gamma(T)\longrightarrow K_0^\Gamma(T'),
\qquad [P]\longmapsto [f^\ast(P)].
\]

\medskip
\textbf{Step 3: Base change on derived categories.}
Let $\Perf_\Gamma(T)$ denote the stable $\infty$--category of perfect
complexes of bi--$\Gamma$--modules over $T$.  The exact functor
$f^\ast$ extends degreewise to a functor on chain complexes, and
flatness ensures that quasi-isomorphisms are preserved.  
This yields a derived tensor functor
\[
f^\ast:\Perf_\Gamma(T)\longrightarrow\Perf_\Gamma(T'),
\]
which is exact, preserves compact objects, and commutes with
distinguished triangles.  Thus $f^\ast$ induces a morphism of
$K$--theory spectra
\[
K_\Gamma(f): K_\Gamma(T) \longrightarrow K_\Gamma(T').
\]

\medskip
\textbf{Step 4: Induced maps on $K_1^\Gamma$.}
Since $K_1^\Gamma(T)$ is the first homotopy group 
$\pi_1 K_\Gamma(T)$, the spectrum map above yields a group
homomorphism
\[
f^\ast : K_1^\Gamma(T)\to K_1^\Gamma(T').
\]
Concretely, in the automorphism presentation of $K_1^\Gamma$,
\[
\langle P,\varphi\rangle \longmapsto
\langle f^\ast(P),\, f^\ast(\varphi)\rangle,
\]
which respects the defining relations because $f^\ast$ is exact and
preserves split short exact sequences and automorphisms.

\medskip
\textbf{Step 5: Functoriality.}
If $g:(T',\Gamma')\to (T'',\Gamma'')$ is another flat morphism, then
$(gf)^\ast\cong g^\ast f^\ast$ on projectives and perfect complexes.
Passing to spectra yields
\[
K_\Gamma(gf)=K_\Gamma(g)\circ K_\Gamma(f),
\]
and therefore $(gf)^\ast=g^\ast\circ f^\ast$ on $K_i^\Gamma$.

\medskip
\textbf{Step 6: Morita invariance.}
If $f$ is a \emph{derived Morita equivalence}, then $f^\ast$ induces an
equivalence 
\[
\Perf_\Gamma(T)\;\simeq\;\Perf_\Gamma(T').
\]
Equivalences of stable $\infty$--categories induce weak equivalences of
$K$--theory spectra (by the Blumberg--Gepner--Tabuada universal
property), so $K_\Gamma(f)$ is an equivalence of spectra.
Consequently, the induced maps on homotopy groups
\[
f^\ast:K_i^\Gamma(T)\xrightarrow{\;\cong\;}K_i^\Gamma(T')
\]
are isomorphisms for all $i\ge 0$, in particular for $i=0,1$.
\end{proof}


\begin{definition}[Determinant of perfect complexes]
\label{def:det}
Let $\Pic_\Gamma(T)$ denote the Picard groupoid of virtual 
$\Gamma$--lines, i.e.\ invertible rank--$1$ objects in 
$\Perf_\Gamma(T)$ modulo stable equivalence.  
A \emph{determinant functor} is a symmetric monoidal functor
\[
\det :\ \Perf_\Gamma(T)\longrightarrow \Pic_\Gamma(T)
\]
satisfying the additivity rule
\[
\det(B^\bullet)\;\cong\;
\det(A^\bullet)\,\otimes\,\det(C^\bullet)
\]
for each distinguished triangle 
$A^\bullet\to B^\bullet\to C^\bullet\to A^\bullet[1]$ in 
$\Perf_\Gamma(T)$.
There exists a universal determinant functor (unique up to equivalence),
and its induced map on fundamental groups identifies
\[
\pi_1\big(\Pic_\Gamma(T)\big) 
\;\cong\; K_1^\Gamma(T),
\]
realizing $K_1^\Gamma(T)$ as the universal target of 
$\Gamma$--linear determinant data.
\end{definition}
\begin{remark}[Relation to Cyclic Homology]
The invariants $K_0^\Gamma(T)$ and $K_1^\Gamma(T)$ constructed here provide the necessary input for the Dennis trace map. In a subsequent work, we will extend this to a full cyclotomic trace
\[
\operatorname{tr}_{\mathrm{cyc}} : K_\Gamma(T) \longrightarrow \mathrm{HC}^{\Gamma}(T)
\]
linking the algebraic K-theory spectrum to the $\Gamma$-noncommutative cyclic homology, following the methods of \cite{Dennis1973, Loday1992, Connes1994}.
\end{remark}
\subsection{Foundational computations and examples}
\label{subsec:examples-K01}

\begin{example}[Matrix invariance]
If $T'=M_m(T)$ with $n$--ary product induced from $T$ (Paper~G),
then the inclusions $\Proj_\Gamma(T)\simeq \Proj(T')$ (Morita) induce isomorphisms
\[
K_i^\Gamma(M_m(T))\ \cong\ K_i^\Gamma(T),\qquad i=0,1.
\]
\end{example}

\begin{example}[Semisimple case, continued]
If $T$ is semisimple , then
\[
K_0^\Gamma(T)\cong \mathbb{Z}^{\,r},
\qquad
K_1^\Gamma(T)\cong \prod_{i=1}^r K_1^\Gamma\big(M_{n_i}^{(n)}(D_i)\big)
\ \cong\ \prod_{i=1}^r K_1^\Gamma(D_i),
\]
by Morita invariance and product decomposition.  In particular, for
$T=\prod_i M_{n_i}^{(n)}(\Ga)$ one has
$K_0^\Gamma(T)\cong \mathbb{Z}^{\,\# i}$ and
$K_1^\Gamma(T)\cong \prod_i K_1^\Gamma(\Ga)$.
\end{example}

\begin{example}[Geometric interpretation]
If $\SpecGnC{T}$ is quasi-compact with finite global dimension (Paper~G),
then $\Perf_\Gamma(T)\simeq \Dcat^{\mathrm{perf}}(\QCoh(\SpecGnC{T}))$ and
\[
K_0^\Gamma(T)\ \cong\ K_0\big(\QCoh(\SpecGnC{T})^{\mathrm{perf}}\big),\qquad
K_1^\Gamma(T)\ \cong\ K_1\big(\QCoh(\SpecGnC{T})^{\mathrm{perf}}\big).
\]
Thus $K_0$ classifies virtual $\Ga$--vector bundles on $\SpecGnC{T}$ and $K_1$
classifies their determinant data and automorphisms.
\end{example}

\subsection{\texorpdfstring
{Explicit Calculations: Triangular Matrix $\Gamma$-Semirings}
{Explicit Calculations: Triangular Matrix Gamma Semirings}}

\label{subsec:calculations}

To demonstrate the computability of these invariants, we calculate the K-groups for the upper triangular matrix $\Gamma$-semiring. Let $S$ be a fixed non-commutative $\Gamma$-semiring, and let $\mathcal{T}_n(S)$ denote the semiring of $n \times n$ upper triangular matrices with entries in $S$, equipped with the standard matrix $\Gamma$-multiplication.

\begin{definition}
The Upper Triangular $\Gamma$-semiring $\mathcal{T}_n(S)$ is the set of matrices $A = (a_{ij})$ such that $a_{ij} \in S$ and $a_{ij} = 0$ for $i > j$. The $\Gamma$-multiplication is defined by:
\[
(A \alpha B)_{ij} = \sum_{k=i}^j a_{ik} \alpha b_{kj}
\]
for $A, B \in \mathcal{T}_n(S)$ and $\alpha \in \Gamma$.
\end{definition}

\begin{theorem}[Decomposition Theorem]
\label{thm:triangular-decomp}
There are canonical isomorphisms
\[
K_0^\Gamma(\mathcal{T}_n(S)) \cong \bigoplus_{i=1}^n K_0^\Gamma(S)
\quad \text{and} \quad
K_1^\Gamma(\mathcal{T}_n(S)) \cong \bigoplus_{i=1}^n K_1^\Gamma(S).
\]
\end{theorem}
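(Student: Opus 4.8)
The plan is to argue by induction on $n$ using the corner (Peirce) decomposition of $\mathcal{T}_n(S)$ with respect to the top-left diagonal idempotent $e=e_{11}$. One checks that $e\,\mathcal{T}_n(S)\,e\cong S$, that $(1-e)\,\mathcal{T}_n(S)\,(1-e)\cong\mathcal{T}_{n-1}(S)$, and that the lower-left block $(1-e)\,\mathcal{T}_n(S)\,e$ vanishes, so that $\mathcal{T}_n(S)$ is an upper-triangular extension
\[
\mathcal{T}_n(S)\;\cong\;\begin{pmatrix} S & M \\ 0 & \mathcal{T}_{n-1}(S)\end{pmatrix},
\qquad M=e\,\mathcal{T}_n(S)\,(1-e),
\]
with $M$ a bi-$\Gamma$-bimodule. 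It therefore suffices to establish, for the two-step triangular semiring $\Lambda$ above, the splitting $K_i^\Gamma(\Lambda)\cong K_i^\Gamma(S)\oplus K_i^\Gamma(\mathcal{T}_{n-1}(S))$ for $i=0,1$, and then to iterate, so that each of the $n$ diagonal positions contributes one copy of $K_i^\Gamma(S)$.

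First I would produce the two candidate inverse maps from structural $\Gamma$-semiring morphisms. The diagonal projection $\pi:\Lambda\to S\times\mathcal{T}_{n-1}(S)$ kills the off-diagonal block $M$, and the diagonal inclusion $\iota:S\times\mathcal{T}_{n-1}(S)\to\Lambda$ splits it, so $\pi\circ\iota=\id$. Both are flat in the sense of \S\ref{subsec:basechange}, since each preserves the split exact structure on projectives and carries projectives to projectives; hence by Proposition~\ref{prop:base-change-K01} they induce homomorphisms $\pi_\ast,\iota_\ast$ on $K_i^\Gamma$ with $\pi_\ast\circ\iota_\ast=\id$. The entire content of the theorem is then the reverse identity $\iota_\ast\circ\pi_\ast=\id$, namely that the strictly-upper block $M$ contributes nothing to $K$-theory.

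To establish this I would analyze $\Proj_\Gamma(\Lambda)$ directly through the orthogonal idempotents $e,1-e$, which decompose every finitely generated projective $\Lambda$-module and exhibit the indecomposable projectives as the two columns $e\Lambda$ and $(1-e)\Lambda$. Each such module carries a canonical filtration whose sub- and quotient-objects are, after applying $e$ and $1-e$, projective over $S$ and over $\mathcal{T}_{n-1}(S)$ respectively, and this filtration is degreewise split because we work with the split exact structure of Proposition~\ref{prop:exact}. Applying the Additivity Theorem~\ref{thm:additivity} to this filtration shows that in $K_0^\Gamma(\Lambda)$ every class equals the sum of its diagonal subquotients, which is exactly $\iota_\ast\circ\pi_\ast=\id$ on $K_0$. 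For $K_1$ I would pass to the automorphism presentation of Proposition~\ref{prop:K1-presentation}: an automorphism of a projective $\Lambda$-module is block upper-triangular relative to the filtration, its diagonal blocks are automorphisms over $S$ and $\mathcal{T}_{n-1}(S)$, and its unipotent off-diagonal part is a product of elementary (shear) automorphisms, which are trivial in $K_1^\Gamma$ by Proposition~\ref{prop:K1-presentation}. Relation~(3) in the definition of $K_1^\Gamma$ then collapses each class onto its diagonal blocks, giving $\iota_\ast\circ\pi_\ast=\id$ on $K_1$ as well.

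The hard part will be the off-diagonal vanishing, i.e.\ proving $\iota_\ast\circ\pi_\ast=\id$ without additive inverses. In the ring-theoretic argument this is the nilpotence of the ideal $M$, realized through subtraction; here that route is unavailable, so I would rely entirely on the split exact structure of $\Proj_\Gamma(\Lambda)$, on the Additivity Theorem~\ref{thm:additivity} for $K_0$, and on the shear-triviality of Proposition~\ref{prop:K1-presentation} for $K_1$. The delicate technical point is verifying that the canonical filtration by $e\Lambda$ is genuinely split as a sequence of bi-$\Gamma$-modules, and that its successive subquotients are projective over the corner semirings $S$ and $\mathcal{T}_{n-1}(S)$, so that the additivity and automorphism-collapse arguments apply verbatim. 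Once this splitting is secured, the two-step case is complete, and induction on $n$ yields the claimed $n$-fold decomposition for both $K_0^\Gamma$ and $K_1^\Gamma$.
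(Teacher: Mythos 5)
Your proposal follows the same skeleton as the paper's proof --- both hinge on the retraction $\pi:\mathcal{T}_n(S)\to S^n$, $\iota:S^n\to\mathcal{T}_n(S)$ with $\pi\circ\iota=\id$, reducing everything to showing that the strictly upper triangular part is invisible to $K$-theory --- but the two arguments justify that crucial vanishing step quite differently. The paper works with the full nilpotent ideal $\mathcal{N}$ of strictly upper triangular matrices in one stroke and appeals to ``the standard result for $K$-theory of semirings with nilpotent ideals'': idempotent lifting modulo $\mathcal{N}$ for $K_0$, and the claim that unipotent matrices $I+N$ are products of elementary matrices (hence trivial in $K_1$). You instead peel off one diagonal corner at a time by induction, using the Peirce decomposition at $e=e_{11}$, and replace the nilpotent-ideal lemma with the split filtration of each projective $\Lambda$-module by $(1-e)$, the Additivity Theorem~\ref{thm:additivity} for $K_0$, and the block-triangular collapse of automorphisms via relation~(3) and the shear-triviality of Proposition~\ref{prop:K1-presentation} for $K_1$. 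What your route buys is an honest accounting of the absence of additive inverses: the ring-theoretic nilpotence argument (and in particular writing $1-e$ and lifting idempotents along $\mathcal{N}$) implicitly uses subtraction, which the paper does not address, whereas your filtration-plus-additivity argument uses only the split exact structure that Proposition~\ref{prop:exact} actually provides. The cost is the extra technical obligation you correctly flag --- verifying that $0\to P(1-e)\Lambda\to P\to Pe\to 0$ is genuinely an admissible (split) sequence of bi-$\Gamma$-modules with projective corner subquotients --- which neither you nor the paper fully discharges, but which is a cleaner and more checkable hypothesis than the unreferenced nilpotent-ideal theorem the paper leans on.
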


\begin{proof}
Let $\pi: \mathcal{T}_n(S) \to S^n$ be the projection homomorphism onto the diagonal entries, defined by $\pi(A) = (a_{11}, a_{22}, \dots, a_{nn})$. Since the product of triangular matrices computes diagonal entries solely from diagonal entries, $\pi$ is a surjective homomorphism of $\Gamma$-semirings.
Conversely, let $\iota: S^n \to \mathcal{T}_n(S)$ be the diagonal inclusion map. Clearly, $\pi \circ \iota = \id_{S^n}$.

These maps induce functors between the respective categories of projective modules:
\[
\begin{tikzcd}
\Cat{Proj}_\Gamma(\mathcal{T}_n(S)) \arrow[r, "\pi_*", shift left] &
\Cat{Proj}_\Gamma(S^n) \arrow[l, "\iota_*", shift left]
\end{tikzcd}
\]
Since $K$-theory is functorial, we obtain split exact sequences on the level of K-groups. The kernel of the projection $\pi$ is the ideal $\mathcal{N}$ of strictly upper triangular matrices. Since $\mathcal{N}$ is a nilpotent ideal (specifically, $\mathcal{N}^n = 0$), the standard result for K-theory of semirings with nilpotent ideals applies:
\begin{enumerate}
    \item \textbf{For $K_0$:} Idempotents can be lifted modulo nilpotent ideals. Thus, any projective module over $\mathcal{T}_n(S)$ is determined up to isomorphism by its image in $S^n$.
    \item \textbf{For $K_1$:} The kernel of the map on units, $GL(\mathcal{T}_n(S)) \to GL(S^n)$, consists of matrices of the form $I + N$ where $N$ is strictly upper triangular. In the $\Gamma$-semiring context, these unipotent matrices are generated by elementary matrices and thus vanish in $K_1$.
\end{enumerate}
Therefore, the map $\pi_*$ is an isomorphism. Combining this with the additivity of K-theory over product rings ($K_*(S^n) \cong \bigoplus_{i=1}^n K_*(S)$), we obtain the result.
\end{proof}

\begin{example}
Let $S = \mathbb{N}$ (the semiring of natural numbers). Since projective modules over $\mathbb{N}$ are free, $K_0^\Gamma(\mathbb{N}) \cong \mathbb{Z}$. Consequently, for the semiring of upper triangular matrices over $\mathbb{N}$:
\[
K_0^\Gamma(\mathcal{T}_n(\mathbb{N})) \cong \mathbb{Z}^n.
\]
This classifies the "dimension vectors" of the projective modules.
\end{example}


\section{Conclusion}
We have successfully extended the classical algebraic K-theory functors $K_0$ and $K_1$ to the setting of non-commutative  $\Gamma$-semirings. By establishing the exact structure on the category of projective bi-$\Gamma$-modules, we have provided a robust algebraic foundation. These invariants capture the intrinsic complexity of the non-commutative $\Gamma$-semiring structure. In future work, we will utilize the exact category $\Proj_\Gamma(T)$ constructed here to apply Quillen’s Q-construction, thereby defining the higher K-theory spectrum and exploring its geometric applications."In future work, we will utilize the exact category $\mathbf{Proj}_\Gamma(T)$ constructed here to apply Quillen's Q-construction \cite{Quillen1973}, thereby defining the higher K-theory spectrum and exploring its geometric applications.
\section*{Acknowledgements}
The author expresses sincere gratitude to the Commissioner of Collegiate Education,
Mangalagiri, Government of Andhra Pradesh, for continuous academic encouragement.
The author also gratefully acknowledges Dr.~Ramachandra~R.\,K., Principal,
Government College (Autonomous), Rajahmundry, for providing a supportive research
environment and institutional facilitation.

\section*{Author Contribution}
The author is solely responsible for the conception, development of the theory,
proofs, writing, and revision of the manuscript.

\section*{Funding}
This research received no external funding.

\section*{Conflict of Interest}
The author declares that there is no conflict of interest regarding the publication
of this manuscript.

\section*{Data Availability}
No datasets were generated or analysed in this study. Hence, data sharing is not 
applicable.
\section*{Ethical Approval}
This article does not involve human participants, animals, biological materials, or
sensitive data. Ethical approval is therefore not required. All mathematical content
is original, properly cited, and developed in accordance with standard academic and
ethical research practices.

\end{document}